\newtheorem{theorem}{Theorem}[section]
\newtheorem*{theorem*}{Theorem}
\theoremstyle{remark}
\newtheorem{definition}[theorem]{Definition}
\newtheorem{example}[theorem]{Example}
\newcommand{\E}{\mathbb{E}}
\newcommand{\R}{\mathbb{R}}
\newcommand{\p}{\partial}
\numberwithin{equation}{section}
\begin{document}
\title{Information Geometry and Statistical Manifold}

\author{Mashbat Suzuki}

\address{Department of Mathematics \\McGill University \\ Montreal\\ Canada}
\email{mashbat.suzuki@mail.mcgill.ca}


\date{\today}

\keywords{Information Theory, Statistics}

\begin{abstract}

We review basic notions in the field of information geometry such as Fisher metric on statistical manifold, $\alpha$-connection and corresponding curvature following Amari's work \cite{amari3, amari1, amari2}. We show application of information geometry to asymptotic statistical inference. 
\end{abstract}

\maketitle

\section{Geometric Structure of Statistical models}

We model a family of probability distribution functions given by a set of parameters as a Riemannian manifold. In other words, we consider each distribution as a point on a  Riemannian manifold. Consider a family $S$ of probability distribution functions over some space $\Omega$. Suppose that each distribution function may be parametrized using $n$ real valued variables $[\xi_1,\cdots,\xi_n]$ such that we can write
$$
S=\{ p_\xi=p(x;\xi) \ | \ \xi=[\xi_1,\cdots,\xi_n]\in \Xi \}
$$ 
where $p(x;\xi)$ is a probability distribution function on $\Omega$. We call $\Xi \subset \R^n$ a \textbf{parameter space}, it is chosen so that the map $\xi\rightarrow p_\xi$ is injective.  
 The set $S$ is called \textbf{parametric model}. It is in spirit analogous to the idea of fiber bundle over $\Omega$, where the fibers are distribution functions. 
 The main idea of information geometry is to use tools from Riemannian geometry to extract information from the underlying statistical model that is otherwise not easily accessable. The idea has been successfully used in various areas including statistical inference and manifold learning as shown in \cite{suna14} 
 In order to have more structure on $S$, we assume several regularity assumptions. We assume that $\Xi$ is open subset of $\R^n$, and for each $x_0\in \Omega$ the following map 
 \begin{align*}
 &\Xi\longrightarrow \R \\
 &\xi \longmapsto p(x_0;\xi)
 \end{align*}
is smooth. In addition we assume that the order of integration and differentiation can be freely re-arrenged. One last assumption on $S$ is that the support of each distribution function stays invariant under change of parameters. This assumption is taken in order to simplify the analysis.

We recognize the parametric model as a manifold by identifying equivalent coordinate charts, observe that the mapping $\phi:S\rightarrow \R^n$ given by $\phi(p_\xi)=\xi$ serves as a coordinate chart for $S$. Furthermore suppose we have $C^\infty$ diffeomorphism $\psi$ from $\Xi$ to $\psi(\Xi)$ then we can use $\eta=\psi(\xi)$ instead of $\xi$ as the parameter so that we obtain $S=\{p_{\psi^{-1}(\eta)} \ | \eta\in \psi(\Xi)\}$. Hence we can consider parametrizations that are related by $C^{\infty}$ diffeomorphisms to be equivalent which allows us to consider $S$ as smooth manifold. It is common to call $S$ satisfying above assumptions a statistical manifold. 

\section{Fisher Information Metric} 

Now that we have a manifold it is time to put a notion of distance on $S$, which means we need a way of measuring distance between probability distribution functions. The Fisher information metric measures closeness of the shape between two distribution functions, it is also proportional to the amount of information that the distribution function contains about the parameter. We define the following as the Fisher information matrix
$$
g_{ij}(\xi) \equiv \E_\xi[\p_i l_\xi \p_j l_\xi]
$$
where $l_\xi=\log p(x;\xi)$ and $\p_i=\frac{\p}{\p \xi^i }$. Note that it is possible to write $g_{ij}=-\E_\xi[\p_i\p_i l_\xi]$ since we can change the order of integration and differentiation. 

Observe that $g_{ij}(\xi)=g_{ji}(\xi)$, hence the Fisher information matrix is symmetric. In general $g_{ij}$ is positive semidefinite, since for any $n$-dimensional vector $c=[c_1,\cdots ,c^n]^t$ we get
\begin{align*}
c^t[g_{ij}(\xi)]c=c^i c^j g_{ij}(\xi) &= \sum_{i=1}^n \sum_{j=1}^n c_i c_j \E_\xi[\p_i l_\xi\p_j l_\xi] \\
&=\E_\xi\left[ \sum_{i=1}^n  c_i \p_i l_\xi \sum_{j=1}^n\p_j l_\xi \right] \\
&=\E_\xi\left[\left( \sum_{i=1}^n c^i\p_i l_\xi \right)^2\right] \\
&=\int_{\Omega} \left\lbrace \sum_{i=1}^{n}c^i\p_i l_\xi \right\rbrace^2 p(x;\xi)\geq 0
\end{align*}
We make $g_{ij}$ positive definite by assuming that the elements $\{ \p_1l_\xi,\cdots,\p_n l_\xi\}$ are linearly independent as functions, which by above equation makes $g_{ij}$ positive definite. This means that $g_{ij}$ provides Riemannian metric on $S$. Hence we have an inner product $g_\xi=\langle , \rangle_\xi$  on each tangent space $T_\xi S$, it can also be written as $g_\xi=g_{ij}dx^i\otimes dx^j$. In other words for any $X,Y\in T_\xi S$ we have
\begin{align*}
\langle X, Y\rangle_\xi &= g_{ij}dx^i\otimes dx^j(X^\mu \p_\mu, Y^\nu \p_\nu) \\
& =g_{ij}X^\mu \delta^i_\mu Y^\nu \delta^j_\nu\\
&=\E[X^iY^j\p_i l_\xi\p_j l_\xi] \\
& =\E[(Xl_\xi)(Y l_\xi)]
\end{align*}
We will give few examples of the metric and statistical manifolds. 
\begin{example} (Gaussian Model)

Consider $S=\{ N(\mu,\sigma^2)\}=\{ p(x;\mu,\sigma) \ | \ (\mu,\sigma)\in \R\times\R^+ \}$ where
$$
p(x;\mu,\sigma)=\frac{1}{\sigma\sqrt{2\pi}}\exp\left\lbrace -\frac{x-\mu}{2\sigma^2} \right\rbrace
$$
So in our case $\xi=[\mu,\sigma]$ and $\Xi=\R\times \R^+$, hence 
$$l_\xi=\log p(x;\xi)=-\log \sigma-\log \sqrt{2\pi}-\frac{(x-\mu)^2}{2\sigma^2}$$

Hence one can check that the metric in this case is 
$$
g_{ij}(\xi)=
\begin{bmatrix}
\frac{1}{\sigma^2} & 0\\
0 & \frac{2}{\sigma^2}
\end{bmatrix}
$$
the line element is thus $ds^2=\frac{d\mu^2+2 d\sigma^2}{\sigma^2}$.  This is similar to the Poincare metric on the upper half plane which is $ds^2=\frac{dx^2+dy^2}{y^2}$, this is constant negative one curvature metric, our statistical manifold is constant negative two curvature. The geodesics on $S$ are given by vertical straight lines and semi-ellipses.

\end{example}

We shall give an inequality that is used often in statistics using the metric. Let $\hat{\xi}$ be an estimator, of $\xi$ which means it is a function of the observed data $x$ which are realizations of some underlying true distribution $p_0(x;\xi)$. An estimator is called \textbf{unbiased estimator} if 
$$
\E_\xi[\hat{\xi}(X)]=\xi \ \text{for} \ \forall{\xi}\in\Xi
$$
The mean square error of an unbiased estimator $\hat{\xi}$ is given by the variance covariance matrix 
$$v^{ij}_\xi=\E[(\hat{\xi}^i(X)-\xi^i)(\hat{\xi}^i(X)-\xi^j)]$$
The variance covariance matrix is the generalization of variance to higher dimensions.
\begin{theorem}(Cramer-Rao inequality)

The variance-covariance matrix $v^{ij}_\xi$ of an unbiased estimator $\hat{\xi}$ satisfies $v^{ij}_\xi\geq g^{ij}_\xi$
\end{theorem}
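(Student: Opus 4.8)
The plan is to recast the matrix inequality $v^{ij}_\xi \geq g^{ij}_\xi$ (understood in the Loewner sense, i.e. the difference is positive semidefinite, where $g^{ij}_\xi$ denotes the inverse of the Fisher matrix $g_{ij}$) as the positive semidefiniteness of a single covariance matrix, and then extract the bound by a Schur complement. First I would record two preliminary identities that follow from the regularity assumptions permitting the interchange of integration and differentiation. Differentiating the normalization $\int_\Omega p(x;\xi)\,dx = 1$ in $\xi^j$ and using $\p_j p = (\p_j l_\xi)\,p$ shows the score is centered, $\E_\xi[\p_j l_\xi]=0$. Differentiating the unbiasedness relation $\int_\Omega \hat{\xi}^i(x)\,p(x;\xi)\,dx = \xi^i$ in $\xi^j$ gives $\int_\Omega \hat{\xi}^i(\p_j l_\xi)\,p\,dx = \delta^i_j$, and subtracting $\xi^i\,\E_\xi[\p_j l_\xi]=0$ yields the key cross-covariance $\E_\xi[(\hat{\xi}^i-\xi^i)\,\p_j l_\xi] = \delta^i_j$.

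Next I would assemble the random $2n$-vector $Z=(\hat{\xi}^1-\xi^1,\dots,\hat{\xi}^n-\xi^n,\ \p_1 l_\xi,\dots,\p_n l_\xi)^t$, whose top block is the estimator error and whose bottom block is the score. By construction its covariance $\E_\xi[ZZ^t]$ is a block matrix whose diagonal blocks are $[v^{ij}_\xi]$ and $[g_{ij}]$, while both off-diagonal blocks equal the identity, thanks to the cross-covariance computed above. Being of the form $\E_\xi[ZZ^t]$, this matrix is automatically positive semidefinite, since for any constant vector $w\in\R^{2n}$ one has $w^t\E_\xi[ZZ^t]w = \E_\xi[(w^tZ)^2]\geq 0$, in direct analogy with the positive-definiteness computation already carried out for $g_{ij}$ in Section 2.

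Finally I would invoke the Schur complement. Since $g_{ij}$ is positive definite by hypothesis, it is invertible with inverse $g^{ij}$, and positive semidefiniteness of the block matrix $\bigl[\begin{smallmatrix} V & I \\ I & G \end{smallmatrix}\bigr]$ is equivalent to positive semidefiniteness of the Schur complement of the score block, namely $V - I\,G^{-1} I = [v^{ij}_\xi] - [g^{ij}_\xi] \geq 0$, which is exactly the claim. If one prefers to bypass the block formalism, the identical conclusion follows from the Cauchy--Schwarz inequality applied to the scalar variables $A=a_i(\hat{\xi}^i-\xi^i)$ and $B=b_j\,\p_j l_\xi$, which gives $(a^t b)^2\leq (a^tVa)(b^tGb)$; choosing $b=G^{-1}a$ reduces this to $a^t G^{-1}a\leq a^tVa$ for all $a$, hence $V\geq G^{-1}$.

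The step I expect to be the main obstacle is the justification of differentiating under the integral sign in the two preliminary identities, as these are precisely where the analytic content lives; the regularity hypotheses on $S$ imposed in Section 1 are designed to grant exactly this interchange, so that the remainder of the argument reduces to the purely linear-algebraic fact about Schur complements of positive semidefinite block matrices.
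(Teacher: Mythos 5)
Your proof is correct and complete. The paper actually states the Cram\'er--Rao inequality without any proof at all (the text moves directly from the theorem statement to a discussion of its significance), so there is no argument in the paper to compare yours against; what you have written is the standard proof and it holds up. The two differentiation identities $\E_\xi[\p_j l_\xi]=0$ and $\E_\xi[(\hat{\xi}^i-\xi^i)\,\p_j l_\xi]=\delta^i_j$ are exactly the analytic content, and they are covered by the regularity assumptions of Section 1 (interchange of integration and differentiation, fixed support). The block-covariance/Schur-complement step and the Cauchy--Schwarz variant you give as an alternative are both standard and correct; the only hypothesis you implicitly use is that $[g_{ij}]$ is invertible, which the paper guarantees by assuming the scores $\p_1 l_\xi,\dots,\p_n l_\xi$ are linearly independent, making $[g_{ij}]$ positive definite. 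One small point worth making explicit if you write this up: the inequality $v^{ij}_\xi\geq g^{ij}_\xi$ must be read in the Loewner (positive semidefinite) order, as you note, and $g^{ij}$ denotes the inverse matrix of $g_{ij}$ --- the paper's notation leaves both conventions implicit. (The paper's displayed definition of $v^{ij}_\xi$ also contains a typo, $\hat{\xi}^i(X)-\xi^j$ in the second factor where $\hat{\xi}^j(X)-\xi^j$ is meant; your reading of it is the intended one.)
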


Above bound provides a fundamental limit on the variance of an estimator, in other words if an estimator $\hat{\xi}$ attains the lower bound then it is the best estimator in the sense that any other estimator will have higher or equal variance than $\hat{\xi}$. 

\section{The $\alpha$-connection}

We define a connection by specifying the connection coefficients first. Consider following function as a connection coefficient, at each point $\xi\in S$ we define
$$\Gamma^{(\alpha)}_{ij,k}\equiv \E_\xi\left[ \left(\p_i\p_j l_\xi+\frac{1-\alpha}{2}\p_i l_\xi \p_j l_\xi\right)(\p_k l_\xi)\right]$$
where $\alpha\in\R$. This quantity transforms in the usual way under coordinate transformations hence we have an affine connection $\nabla^{\alpha}$ on $S$ defined by 
$$
\langle\nabla^{(\alpha)}_{\p_i}\p_j,\partial_k\rangle_\xi=\Gamma^{(\alpha)}_{ij,k}
$$
where the inner product is given by the Fisher metric. The connection defined as above is called \textbf{$\alpha$-connection}. The alpha connection is useful in asymptotic theory of estimation as we shall see later on. There are three important values for alpha, since as we shall see that we can get any other connection as combinations of these three, these three values of $\alpha$ are $-1,0,1$. 

We can exchange between any two $\alpha, \beta\in \R$ connection coefficients by the following formula
$$
\Gamma^{(\beta)}_{ij,k}=\Gamma^{(\alpha)}_{ij,k}+\frac{\alpha-\beta}{2}T_{ijk}
$$ 
where $T_{ijk}$ is a covariant symmetric tensor defined as 
$$
T_{ijk} \equiv \E_\xi[\p_i l_\xi\p_j l_\xi\p_k l_\xi]
$$

Above formulas can be used to show that general $\alpha$-connection satisfy following two equality 
\begin{align*}
\nabla^{(\alpha)} &= (1-\alpha)\nabla^{(0)}+\alpha\nabla^{(1)} \\ 
&=\frac{1+\alpha}{2}\nabla^{(1)}+\frac{1-\alpha}{2}\nabla^{(-1)}
\end{align*}
hence it suffices to study only $\nabla^{(-1)}$ and $\nabla^{(1)}$. As we shall see these two connections have special properties later on. 

\begin{theorem}
The 0-connection is the Levi-Civita connection with respect to the Fisher metric 
\end{theorem}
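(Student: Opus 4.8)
The plan is to reduce the statement to a single tensor identity by invoking the fundamental theorem of Riemannian geometry. The Levi-Civita connection is the unique torsion-free, metric-compatible affine connection, and it is completely pinned down by its Christoffel symbols of the first kind,
$$
\Gamma_{ij,k}=\tfrac{1}{2}\left(\p_i g_{jk}+\p_j g_{ik}-\p_k g_{ij}\right).
$$
Since $\nabla^{(0)}$ is defined precisely through $\langle\nabla^{(0)}_{\p_i}\p_j,\p_k\rangle_\xi=\Gamma^{(0)}_{ij,k}$, and the same relation applied to the Levi-Civita connection returns $\Gamma_{ij,k}$, while the Fisher metric $g_{ij}$ is positive definite (hence non-degenerate) by the assumption of Section~2, matching the inner products against every $\p_k$ determines the connection uniquely. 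Therefore it suffices to prove the single identity $\Gamma^{(0)}_{ij,k}=\Gamma_{ij,k}$, and everything collapses to one metric computation.

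First I would differentiate the metric. Writing $g_{ij}=\int_\Omega \p_i l_\xi\,\p_j l_\xi\,p(x;\xi)\,dx$ and using the standing regularity assumption that $\p_k$ passes under the integral over $\Omega$, the crucial identity $\p_k p_\xi=p_\xi\,\p_k l_\xi$ (equivalently $\p_k l_\xi=\p_k p_\xi/p_\xi$) produces
$$
\p_k g_{ij}=\E_\xi[(\p_i\p_k l_\xi)(\p_j l_\xi)]+\E_\xi[(\p_i l_\xi)(\p_j\p_k l_\xi)]+\E_\xi[\p_i l_\xi\,\p_j l_\xi\,\p_k l_\xi].
$$
The final term is exactly $T_{ijk}$. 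Introducing the shorthand $A_{ab,c}=\E_\xi[(\p_a\p_b l_\xi)(\p_c l_\xi)]$, which is symmetric in $a,b$, each of $\p_i g_{jk}$, $\p_j g_{ik}$, $\p_k g_{ij}$ becomes a sum of two $A$-terms plus one copy of $T_{ijk}$, where I would use that $T_{ijk}$ is totally symmetric to write each cubic contribution as the same $T_{ijk}$.

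Next I would assemble the Christoffel symbol of the first kind. Forming $\tfrac{1}{2}(\p_i g_{jk}+\p_j g_{ik}-\p_k g_{ij})$, the mixed second-derivative terms $A_{ik,j}$ and $A_{jk,i}$ each appear once with a plus and once with a minus sign and so cancel, leaving the two surviving copies of $A_{ij,k}$, while the three copies of $T_{ijk}$ combine as $(1+1-1)T_{ijk}=T_{ijk}$. After the overall factor $\tfrac12$ this gives
$$
\Gamma_{ij,k}=\E_\xi[(\p_i\p_j l_\xi)(\p_k l_\xi)]+\tfrac12\,\E_\xi[\p_i l_\xi\,\p_j l_\xi\,\p_k l_\xi],
$$
which is exactly $\Gamma^{(0)}_{ij,k}$ obtained by setting $\alpha=0$ in the defining formula. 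This proves the identity and hence the theorem.

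I expect the only genuine obstacle to be the index bookkeeping in the telescoping step: tracking which second-order factor pairs with which first-order log-derivative under the three permutations, and correctly invoking the total symmetry of $T_{ijk}$ so that the three cubic contributions collapse to a single $\tfrac12 T_{ijk}$. The analytic input, namely interchanging $\p_k$ with the integral over $\Omega$, is already granted by the regularity assumptions imposed on $S$, so no further justification is required there.
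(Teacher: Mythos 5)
Your proof is correct, and it takes a route that is equivalent to but packaged differently from the paper's. The paper verifies the two defining axioms of the Levi-Civita connection separately: metric compatibility, via the identity $\p_k g_{ij}=\Gamma^{(0)}_{ki,j}+\Gamma^{(0)}_{kj,i}$ obtained by splitting the cubic term $T_{ijk}$ evenly between the two coefficients, and torsion-freeness from the symmetry of $\Gamma^{(0)}_{ij,k}$ in $i,j$; it then appeals to uniqueness. You instead collapse both axioms into the closed-form Christoffel formula of the first kind and verify the single identity $\Gamma^{(0)}_{ij,k}=\tfrac12\left(\p_i g_{jk}+\p_j g_{ik}-\p_k g_{ij}\right)$. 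The analytic core is identical in both arguments: differentiating $g_{ij}=\E_\xi[\p_i l_\xi\,\p_j l_\xi]$ under the integral and using $\p_k p_\xi = p_\xi\,\p_k l_\xi$ to produce two second-derivative terms plus $T_{ijk}$. Your version buys a slightly cleaner logical structure --- one identity instead of two checks, with torsion-freeness absorbed into the formula and the non-degeneracy of $g$ explicitly invoked to pin down the connection from its pairings against every $\p_k$, a point the paper leaves implicit. The paper's version makes the geometric content more visible, in particular exhibiting the even split of $T_{ijk}$ between $\Gamma^{(0)}_{ki,j}$ and $\Gamma^{(0)}_{kj,i}$ that singles out $\alpha=0$ among all $\alpha$-connections. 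Your index bookkeeping checks out: the cross terms $A_{ik,j}$ and $A_{jk,i}$ do cancel in the alternating sum, the two surviving copies of $A_{ij,k}$ and the net single copy of $T_{ijk}$ yield $A_{ij,k}+\tfrac12 T_{ijk}$ after the overall factor $\tfrac12$, which is exactly the $\alpha=0$ case of the defining formula.
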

\begin{proof}
 We have to check that the zero connection is metric compatable, and torsion free. First we check metric compatibility, which says that for any $X,Y,Z\in T S$ we have 
 $$
 Z\langle X,Y\rangle=\langle \nabla_Z X\rangle Y+\langle X,\nabla_Z Y\rangle
 $$
 Let $X=X^i\p_i$, $Y=Y^j \p_j$ and $Z=Z^k \p_k$, so  at each point $T_\xi S$ we get
 $$
 Z^k X^i Y^j\p_k g_{ij}=Z^k  X^i Y^j \langle \nabla_{\p_k}^{(0)}\p_i,\p_j  \rangle +Z^k X^i Y^j \langle  \p_i,\nabla_{\p_k}^{(0)}\p_j \rangle
 $$
Hence it is enough to show that 
$$
\p_k g_{ij}=\langle \nabla_{\p_k}^{(0)}\p_i,\p_j  \rangle+ \langle  \p_i,\nabla_{\p_k}^{(0)}\p_j \rangle
$$
We check above by recalling the definition of $g_{ij}$ as an expectation
\begin{align*}
\p_k g_{ij} &=\p_k \E_\xi [\p_i l_\xi \p_j l_\xi] \\
&= \E[(\p_k \p_i l_\xi)(\p_j l_\xi)]+\E[(\p_i l_\xi)(\p_k\p_j l_\xi)]+\E[(\p_i l_\xi)(\p_j l_\xi)(\p_k l_\xi)]\\
&= \E[(\p_k \p_i l_\xi)(\p_j l_\xi) + \frac{1}{2}(\p_i l_\xi)(\p_j l_\xi)(\p_k l_\xi)]+ \\
&+\E[(\p_i l_\xi)(\p_k\p_j l_\xi)+\frac{1}{2}(\p_i l_\xi)(\p_j l_\xi)(\p_k l_\xi)] \\
&=  \Gamma^{(\alpha)}_{ki,j}+\Gamma^{(\alpha)}_{kj,i} \\
&= \langle \nabla_{\p_k}^{(0)}\p_i,\p_j  \rangle+ \langle  \p_i,\nabla_{\p_k}^{(0)}\p_j \rangle
\end{align*}
Hence the 0-connection is metric compatible. We now have to show that the connection is torsion free, but it is trivial since we know from differential geometry,  that connection is torsion free if and only if the connection coefficients are symmetric on the lower indicies. Since partial derivatives commute we have 
$$
\Gamma^{(\alpha)}_{ij,k}=\Gamma^{(\alpha)}_{ji,k}
$$
Therefore 0-connection is a Levi Civita connection with respect to the Fisher metric. 

\end{proof}

There are two common types of families that play an important role in information geometry and statistics. 
\begin{definition}
Ab $n$-dimensional model $S$ is called \textbf{exponential family}  if $S=\{p(x;\xi)\ | \ \xi\in \Xi\}$ with
$$p(x;\xi)=\exp\left\lbrace K(x)+\sum_{i=1}^{n}\xi^i F_i(x)-\psi(\xi) \right\rbrace$$
where $\{F_1,F_2,\cdots,F_n\}$ are  linearly independent  functions which are not identically constant on $\Omega$, and $\psi$ is a function on $\Xi$
\end{definition}
From the normalization condition we get the following formula 
$$
\psi(\xi)=\log\int_\Omega \exp\left[ K(x)+\sum_{i=1}^{n}\xi^i F_i(x)\right] dx
$$  
thus $\psi$ is determined by the choice of $K(x)$ and $F_i(x)$.
Many important models are known to be exponential families such as Normal, Multivariate Normal and Poisson distributions. There is a connection that is flat over exponential family this connection is $1$-connection for this reason we call $\nabla^{(1)}$ an \textbf{exponential connection} and denote it as $\nabla^{(e)}$. There is another type of family which is very common appear in statistics. 

\begin{definition}
Ab $n$-dimensional model $S$ is called \textbf{mixture family}  if $S=\{p(x;\xi)\ | \ \xi\in \Xi\}$ with
$$p(x;\xi)= K(x)+\sum_{i=1}^{n}\xi^i F_i(x)$$
where $\{F_1,F_2,\cdots,F_n\}$ are  linearly independent  functions which are not constant on $\Omega$
\end{definition}
The mixture family can be used model uniform, $F$-distribution and many other distributions. Similar to previous example there is a connection that is flat on mixture family, this connection is $\nabla^{(-1)}$  for this reason denote $\nabla^{(-1)}$ as $\nabla^{(m)}$. We summarize the information given here in the following theorem.
\begin{theorem}
An exponential family is e-flat and mixture family is m-flat
\end{theorem}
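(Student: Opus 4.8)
The plan is to prove both flatness statements directly, by exhibiting in each case an affine coordinate system in which the relevant connection coefficients vanish identically; the natural parameter $\xi$ itself will turn out to be such a system. Recall that a torsion-free connection (and both $\nabla^{(e)}=\nabla^{(1)}$ and $\nabla^{(m)}=\nabla^{(-1)}$ are torsion-free, since the coefficients $\Gamma^{(\alpha)}_{ij,k}$ are symmetric in $i,j$ as shown earlier) has vanishing curvature precisely when some chart makes its coefficients identically zero. Because the Fisher metric is positive definite, the first-kind coefficients $\Gamma^{(\alpha)}_{ij,k}=\langle \nabla^{(\alpha)}_{\p_i}\p_j,\p_k\rangle$ vanish for all $k$ if and only if the ordinary coefficients $g^{kl}\Gamma^{(\alpha)}_{ij,l}$ vanish. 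Hence it suffices to check $\Gamma^{(e)}_{ij,k}\equiv 0$ in the $\xi$-coordinates of an exponential family, and $\Gamma^{(m)}_{ij,k}\equiv 0$ in the $\xi$-coordinates of a mixture family. Throughout I would use the elementary identity $\E_\xi[\p_k l_\xi]=0$, obtained by differentiating the normalization $\int_\Omega p(x;\xi)\,dx=1$ under the integral sign.

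For the exponential family, I would first compute the score and its derivative from $l_\xi=K(x)+\sum_i \xi^i F_i(x)-\psi(\xi)$, giving $\p_i l_\xi=F_i(x)-\p_i\psi(\xi)$ and $\p_i\p_j l_\xi=-\p_i\p_j\psi(\xi)$. The crucial observation is that $\p_i\p_j l_\xi$ depends on $\xi$ alone and not on $x$, so it factors out of the expectation. Taking $\alpha=1$ annihilates the quadratic score term in the definition of $\Gamma^{(\alpha)}_{ij,k}$, leaving
\[
\Gamma^{(e)}_{ij,k}=\E_\xi[(\p_i\p_j l_\xi)(\p_k l_\xi)]=-\p_i\p_j\psi(\xi)\,\E_\xi[\p_k l_\xi]=0.
\]
Thus the natural parameters are $\nabla^{(e)}$-affine and the exponential family is e-flat.

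For the mixture family, the key step is the algebraic identity
\[
\p_i\p_j l_\xi+\p_i l_\xi\,\p_j l_\xi=\frac{\p_i\p_j p}{p},
\]
which follows from $\p_i\p_j\log p=\frac{\p_i\p_j p}{p}-\frac{(\p_i p)(\p_j p)}{p^2}$ together with $\p_i l_\xi=\frac{\p_i p}{p}$. Setting $\alpha=-1$ reproduces exactly the left-hand side of this identity inside the expectation defining $\Gamma^{(\alpha)}_{ij,k}$. Since $p(x;\xi)=K(x)+\sum_i \xi^i F_i(x)$ is affine in $\xi$, we have $\p_i\p_j p=0$, so the bracket vanishes identically and hence
\[
\Gamma^{(m)}_{ij,k}=\E_\xi\!\left[\frac{\p_i\p_j p}{p}\,\p_k l_\xi\right]=0.
\]
Therefore the mixture parameters are $\nabla^{(m)}$-affine and the mixture family is m-flat.

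The computations themselves are short, so the only care needed is in the two foundational facts being invoked: the vanishing $\E_\xi[\p_k l_\xi]=0$ used in the exponential case, and the claim that vanishing of the coefficients in a single chart genuinely certifies flatness. The latter is where I would be most careful: I would either cite the standard fact that a torsion-free connection admitting a chart of identically vanishing coefficients has zero curvature, or, if the intended meaning of \emph{flat} here is simply that the connection admits an affine coordinate system, note that we have produced such a system outright. No genuinely hard obstacle arises; the entire content lies in selecting the value of $\alpha$ for which the $x$-dependent terms either factor out of the expectation (exponential, $\alpha=1$) or collapse via the log-derivative identity (mixture, $\alpha=-1$).
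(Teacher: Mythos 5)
Your proof is correct and is exactly the intended argument: the paper states this theorem without proof (deferring to \cite{amari1}), and the standard justification is precisely the one you give — in the natural parameters of an exponential family $\p_i\p_j l_\xi=-\p_i\p_j\psi(\xi)$ is independent of $x$, so $\Gamma^{(1)}_{ij,k}$ factors as $-\p_i\p_j\psi(\xi)\,\E_\xi[\p_k l_\xi]=0$, while in the mixture parameters $\p_i\p_j p=0$ kills $\Gamma^{(-1)}_{ij,k}$ via the identity $\p_i\p_j l_\xi+\p_i l_\xi\,\p_j l_\xi=\p_i\p_j p/p$. Your attention to what ``flat'' means (identically vanishing coefficients in some chart versus vanishing curvature and torsion) is well placed and correctly resolved, since both connections are torsion-free and the curvature tensor is built from the $\Gamma$'s and their first derivatives, all of which vanish in the exhibited charts.
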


The usefulness of above theorem can be demonstrated in next theorem, reader can refer to \cite{amari1}, for more detailed discussion. Note that bias corrected estimator $\hat{u}^{*}$ is defined as
$$
\E[\hat{u}^{*}]-u=O\left( \frac{1}{N^2}\right)
$$
In other words, bias corrected estimators deviate from being unbias by a small amount as the number of trials increase. The following theorem gives an application of information geometry to asymptotic statistical inference and uses the ideas we have developed above.
\begin{theorem}
The mean square error of bias corrected first-order efficient estimator is give asymptotically by the expansion:
$$
\E[(\hat{u}^{*a}-u^a)(\hat{u}^{*b}-u^b)]=\frac{1}{N}g^{ab}+\frac{1}{2N^2}K^{ab}+O\left(\frac{1}{N^3}\right)
$$
where
 $$K^{ab}=(\Gamma^{(m)}_M)^{2ab}+2(H^{(e)}_M)^{2ab}+(H^{(m)}_A)^{2ab}$$
 where the terms $\Gamma^{(m)}_M,H^{(e)}_M$ and $H^{(m)}_A$ represent the m-connection coefficients of M, the embedding e-curvature of the model M, and the embedding m curvature of $A(u)$ which is estimating submanifold, respectively:
 \begin{align*}
 (\Gamma^{(m)}_M)^{2ab} &={\Gamma^{(m)}}^a_{cd}{\Gamma^{(m)}}^b_{ef}g^{ce}g^{df}\\
 (H^{(e)}_M)^{2ab} &={H^{(e)}}^k_{ce}{H^{(e)}}^\lambda_{df}g_{k\lambda}g^{cd}g^{ea}g^{fb}  \\
 (H^{(m)}_A)^{2ab} &= {H^{(m)}}^{a}_{k\lambda}{H^{(m)}}^{b}_{\mu\nu}g^{k\mu}g^{\lambda \nu}\\
 \end{align*}
\end{theorem}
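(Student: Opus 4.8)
The plan is to establish this as a second-order asymptotic expansion of the sampling distribution of the estimator, in the spirit of Amari's higher-order theory. I would fix $N$ independent observations $x_1,\dots,x_N$ drawn from the true distribution $p(x;u)$, and regard a first-order efficient estimator $\hat u$ as being determined by a family of estimating submanifolds $A(u)$ that foliate a neighbourhood of the model $M$ inside the ambient space of distributions: the estimate is the value $u$ whose leaf $A(u)$ contains the empirical distribution. The structural input is that the asymptotic behaviour of $\hat u$ is governed by three pieces of geometry, namely how the coordinates curve inside $M$ (the m-connection $\Gamma^{(m)}_M$), how $M$ sits inside the full space of distributions (its e-curvature $H^{(e)}_M$), and how the leaves $A(u)$ are bent (their m-curvature $H^{(m)}_A$). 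I introduce the normalized score $Z_i=\frac{1}{\sqrt N}\sum_{\alpha=1}^N \p_i l_u(x_\alpha)$, which by the central limit theorem is asymptotically Gaussian with covariance $g_{ij}$.

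Next I would produce a stochastic (Edgeworth-type) expansion of the estimation error by solving the estimating equation perturbatively in powers of $N^{-1/2}$:
$$
\sqrt N\,(\hat u^a-u^a)=g^{ab}Z_b+\frac{1}{\sqrt N}\,Q^a(Z)+O\!\left(\frac{1}{N}\right),
$$
where $Q^a$ is a quadratic form in the score whose coefficients are assembled from the second derivatives of $l_u$ and from the relations defining $A(u)$. The leading term $g^{ab}Z_b$ has covariance $g^{ab}$, which is exactly the Cramer-Rao bound and encodes first-order efficiency. The remaining work is to rewrite the coefficients of $Q^a$ and of the next order term invariantly. Here I would use the exchange formula $\Gamma^{(\beta)}_{ij,k}=\Gamma^{(\alpha)}_{ij,k}+\frac{\alpha-\beta}{2}T_{ijk}$ and the e-flat/m-flat structure of Section 4 to pass between the $\alpha=1$ and $\alpha=-1$ descriptions, so that each random coefficient is expressed through $\Gamma^{(m)}_M$, $H^{(e)}_M$, and $H^{(m)}_A$.

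I would then form the second moment $\E[(\hat u^{*a}-u^a)(\hat u^{*b}-u^b)]$, expand it using the moment structure of $Z$ (its second and third cumulants are $O(1)$, controlled by $g_{ij}$ and $T_{ijk}$), and take expectations term by term. The bias correction is designed precisely to cancel the deterministic $O(N^{-1})$ shift in $\E[\hat u]$, so that the surviving contributions at order $N^{-2}$ are variance-type. Collecting these and contracting the index pairs against the metric should reproduce the three invariants with the stated weights: the squared m-connection term $(\Gamma^{(m)}_M)^{2ab}$ with coefficient $1$, the squared e-curvature $(H^{(e)}_M)^{2ab}$ with coefficient $2$, and the squared m-curvature $(H^{(m)}_A)^{2ab}$ with coefficient $1$, yielding $\frac{1}{2N^2}K^{ab}$.

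The hard part will be the combinatorial bookkeeping of the Edgeworth expansion together with the invariant identification: one must verify that the dependence on the chosen parametrization enters only through $\Gamma^{(m)}_M$, that the dependence on the embedding of $M$ enters only through the squared e-curvature, and that the freedom in the choice of estimator is concentrated entirely in the m-curvature of $A(u)$, so that changing the estimator can only increase $K^{ab}$ through the last, non-negative term. The accompanying analytic delicacy is to bound the neglected remainder uniformly as genuinely $O(N^{-3})$ and to confirm that the bias correction introduces no new lower-order contamination, which is what makes the expansion meaningful to order $N^{-2}$.
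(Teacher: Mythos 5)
The paper itself gives no proof of this theorem: it is quoted from Amari with a citation, and the only surrounding remark is the observation that $H^{(e)}_M$ vanishes for exponential families. So there is no in-paper argument to measure you against, and your proposal should be judged as a reconstruction of Amari's higher-order asymptotic theory. As a roadmap it is faithful: the normalized score $Z_i=\frac{1}{\sqrt N}\sum_\alpha \p_i l_u(x_\alpha)$, the stochastic expansion of the estimating equation, the identification of the leading covariance with $g^{ab}$, the role of the bias correction in killing the $O(N^{-1})$ mean shift, and the attribution of the three invariants to the parametrization of $M$, the embedding of $M$, and the choice of $A(u)$ respectively are all the right ingredients.

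But what you have written is a plan, not a proof, and the gap sits exactly where the theorem's content lives. First, the entire claim that the $N^{-2}$ coefficient equals $\frac{1}{2}\bigl[(\Gamma^{(m)}_M)^{2ab}+2(H^{(e)}_M)^{2ab}+(H^{(m)}_A)^{2ab}\bigr]$ --- including the weights $1,2,1$ --- is asserted via ``should reproduce'' rather than derived; without the explicit form of $Q^a$ and the moment computation, nothing distinguishes this statement from one with different coefficients. Second, your truncation is one order too short: writing $\hat u^a-u^a=N^{-1/2}A^a+N^{-1}B^a+O(N^{-3/2})$ and squaring, the cross term between $N^{-1/2}A^a$ and the unexpanded $O(N^{-3/2})$ remainder contributes at $O(N^{-2})$, so the cubic-in-$Z$ term of the expansion must be made explicit to control the MSE at the order the theorem claims. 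Third, the third cumulant of the normalized score is $O(N^{-1/2})$ (it equals $N^{-1/2}T_{ijk}$), not $O(1)$ as you state; this is precisely the bookkeeping that determines which cross terms survive at $O(N^{-2})$, so the misstatement is not cosmetic. To complete the argument you would need to carry out the expansion to third order, compute $\E[A^aB^b]$, $\E[B^aB^b]$, and $\E[A^aC^b]$ using the cumulants of $Z$, and verify that after the bias correction the surviving terms contract to exactly the three stated squares.
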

Note that  $H^{(e)}_M$ is zero when $M$ is exponential family by the theorem (3.4). 
In order to get better asymptotic mean square error one can minimize individual curvatures by changing the estimator appropriately which decreases the mean square error . 
Hence information geometry gives methods to calculate asymptotic mean square errors more efficiently, and provides a way of designing estimators with desirable asymptotic properties. 
\begin{flushleft}

\textbf{Acknowledgement}: I would like to thank my project supervisor Prof. Prekash Panangedan, for his patience, guidance, and encouragement. I am very lucky to have had the opportunity to work with him and learn from him. 

\end{flushleft}
\bibliographystyle{plain}
\bibliography{BibMash}
\end{document}